\newlength{\defbaselineskip}
\theoremstyle{plain}
\theoremstyle{definition}
\newcommand{\gchoose}[2]{\left[\begin{array}{c}{#1 } \\{#2} \end{array}\right]  }
\theoremstyle{plain}
\newtheorem{thm}{Theorem}
\newtheorem{lem}[thm]{Lemma}
\newtheorem{cor}[thm]{Corollary}
\newtheorem{conj}[thm]{Conjecture}
\theoremstyle{definition}
\newtheorem{defn}[thm]{Definition}
\newtheorem{exmp}[thm]{Example}
\numberwithin{equation}{section}
\begin{document}





\title[Sign-coherency of symmetric polynomials]{Strong sign-coherency of certain symmetric polynomials, with application to cluster algebras}
\author{Kyungyong Lee}
\thanks{Research partially supported by NSF grant DMS 0901367.}


\address{Department of Mathematics, University of Connecticut, Storrs, CT 06269}
\email{{\tt kyungl@purdue.edu}}
\begin{abstract}
For each positive integer $n$, we define a polynomial in the variables $z_1,\cdots,z_n$ with coefficients in the ring $\mathbb{Q}[q,t,r]$ of polynomial  functions of three parameters $q, t, r$. These polynomials naturally arise in the context of cluster algebras.  We conjecture that they are symmetric polynomials in $z_1,\cdots,z_n$, and that their expansions in terms of monomial, Schur, complete homogeneous, elementary and power sum symmetric polynomials are sign-coherent. 
\end{abstract}

 \maketitle
 



\section{introduction}
The purpose of this note is to introduce an interesting family of (conjecturally symmetric) polynomials, which naturally arises in the context of cluster algebras.  For each positive integer $n$, we define a polynomial in the variables $Z=\{z_1,\cdots,z_n\}$ with coefficients in the ring $\mathbb{Z}[q,t,r]$ of polynomial  functions of the three parameters $q, t, r$. In Section 2, we will explain their connection to cluster algebras.  In order to define them, we need some notations.

Let $S=\{1,2,3,\cdots, n \}$, and $\mathcal{S}$ be the collection of all set partitions of $S$. For any subset  $S_j$, say $\{v_1,\cdots,v_m\}$, of $S$, let 
$$\sigma(S_j):=z_{v_1}+\cdots+z_{v_m}.$$ 
When $P=S_1\sqcup S_2\sqcup \cdots \sqcup S_k$ is a partition of $S$, we denote $k$ by $|P|$.  We give an order on $\{S_1,\cdots, S_k\}$ as follows : $S_i < S_j$ if and only if $\min S_i < \min S_j$. For any partition of $S$, we assume that $S_1< \cdots <S_k$. Let $$d(j,i):=|S_j|\sigma(S_i)-|S_i|\sigma(S_j).$$   

\begin{defn}
Let $T: \{1, 2,\cdots, m\} \longrightarrow \mathbb{N}$ be any strictly increasing function. Let $\mathcal{W}$ be the set of all permutations of $\{2,\cdots,m\}$. We define $e(T)$ by $$e(T)=\left\{\begin{array}{ll} 1, & \text{ if }m=1\\  \frac{1}{m!}\sum_{(p_2,\cdots, p_m)\in \mathcal{W}} \prod_{i=1}^{m-1}  (r(z_{T(1)}+z_{T(p_2)}+\cdots+z_{T(p_{i})}-iz_{T(p_{i+1})})-i),  & \text{ otherwise}.  \end {array}\right.$$
Let $S_j$ be any subset, say $\{v_1<\cdots<v_m\}$, of $S=\{1,2,3,\cdots, n\}$. It naturally gives rise to the  increasing function $T$, that is $T(i)=v_i$ $(1\leq i\leq m)$. By abuse of notation, set $e(S_j)=e(T).$ For any partition $P (=S_1\sqcup S_2\sqcup \cdots \sqcup S_k)$ of $S$, we define $$e(P):=\prod_{j=1}^k e(S_j).$$ \qed
\end{defn}
\begin{exmp}
$$\aligned &e(\{1,2,4\}\sqcup\{3,5\})=e(\{1,2,4\})e(\{3,5\})\\
&=\frac{1}{3!}\big((r(z_1-z_2)-1)(r(z_1+z_2-2z_4)-2)+(r(z_1-z_4)-1)(r(z_1+z_4-2z_2)-2)  \big)\\
&\,\,\,\,\,\,\, \times\frac{1}{2!}(r(z_3-z_5)-1).\endaligned$$
\end{exmp}
Now we are ready to define our polynomial. 

\begin{defn}
$$EC_n (Z; q, t, r):=n!\sum_{P(=S_1\sqcup S_2\sqcup \cdots \sqcup S_{|P|})\in \mathcal{S}} e(P) \prod_{j=1}^{|P|} \left(-|S_j|q + t\sigma(S_j) -r\sum_{i=1}^{j-1} d(j,i)\right).$$
\end{defn}

We call this the $`EC$'-polynomial, because it naturally occurs in the expression of the `Euler Characteristic' of a cell of a certain quiver Grassmannian. Despite its non-symmetric appearance, the $EC$-polynomial is expected to be symmetric in $z_1,\cdots,z_n$. Furthermore, it seems to have surprising positivity properties.  

\begin{conj}\label{symconj}
For every positive integer $n$, $EC_n (Z; q, t, r)$ is a symmetric polynomial of $z_1,\cdots,z_n$.
\end{conj}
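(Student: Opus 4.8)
Write $\Sigma_{j-1}=\sigma(S_1)+\cdots+\sigma(S_{j-1})$ and $N_{j-1}=|S_1|+\cdots+|S_{j-1}|$ for the partial sums over the blocks preceding $S_j$ in the min-ordering. Since $\sum_{i=1}^{j-1}d(j,i)=|S_j|\,\Sigma_{j-1}-\sigma(S_j)\,N_{j-1}$, each factor in the definition of $EC_n$ becomes
\[
-|S_j|q+t\sigma(S_j)-r\sum_{i=1}^{j-1}d(j,i)=-|S_j|\bigl(q+r\Sigma_{j-1}\bigr)+\sigma(S_j)\bigl(t+rN_{j-1}\bigr).
\]
For a finite set $U$ of positive integers set
\[
G(U;q,t)=\sum_{P'}e(P')\prod_j\bigl(-|S_j'|(q+r\Sigma_{j-1}')+\sigma(S_j')(t+rN_{j-1}')\bigr),
\]
the sum over partitions $P'=S_1'\sqcup\cdots$ of $U$ with blocks ordered by minimum; then $EC_n=n!\,G(S;q,t)$, and conditioning on the block that contains $\min U$ gives
\[
G(U;q,t)=\sum_{\substack{A\subseteq U\\ \min U\in A}}e(A)\bigl(-|A|q+\sigma(A)t\bigr)\,G\bigl(U\setminus A;\,q+r\sigma(A),\,t+r|A|\bigr),\qquad G(\emptyset;q,t)=1.
\]
My plan is to exhibit a manifestly symmetric polynomial $\Phi$ and to prove $EC_n=n!\,\Phi$ by induction on $n$ using this recursion.

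To build $\Phi$: for a block $B$ with $m=|B|$ elements and a distinguished element $b\in B$, let $e_b(B)$ be the symmetrization occurring in the definition of $e$, but with the internal ordering started at $b$ instead of at $\min B$, so that $e(S_j)=e_{\min S_j}(S_j)$; then
\[
\widehat e(B):=\sum_{b\in B}e_b(B)=\frac{1}{m!}\sum_{(u_0,\dots,u_{m-1})}\ \prod_{i=1}^{m-1}\Bigl(r\textstyle\sum_{l<i}(z_{u_l}-z_{u_i})-i\Bigr),
\]
with the inner sum now over all $m!$ orderings of $B$, is symmetric in $\{z_v:v\in B\}$. Put
\[
\Phi:=\sum_{((B_1,b_1),\dots,(B_k,b_k))}\frac{1}{k!\,|B_1|\cdots|B_k|}\ \prod_{j=1}^{k}e_{b_j}(B_j)\ \prod_{j=1}^{k}\Bigl(-|B_j|\bigl(q+r\tilde\Sigma_{j-1}\bigr)+\sigma(B_j)\bigl(t+r\tilde N_{j-1}\bigr)\Bigr),
\]
the outer sum over all ordered set partitions $B_1\sqcup\cdots\sqcup B_k$ of $S$ (blocks in an arbitrary order) equipped with a choice of $b_j\in B_j$ for each $j$, and $\tilde\Sigma_{j-1}=\sum_{i<j}\sigma(B_i)$, $\tilde N_{j-1}=\sum_{i<j}|B_i|$. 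Summing over the $b_j$ turns $\prod_j e_{b_j}(B_j)$ into the symmetric $\prod_j\widehat e(B_j)$; collecting the remaining terms by the underlying unordered partition and averaging over the block orderings then displays $\Phi$ as a sum, over all set partitions of $S$, of summands that transform equivariantly under relabelling; since relabelling permutes the set partitions, $\Phi$ is $S_n$-invariant, hence symmetric. Conjecture~\ref{symconj} is thus reduced to $EC_n=n!\,\Phi$; using the recursion for $G$, this is, by induction on $n$, equivalent to the identity
\[
\sum_{\substack{A\subseteq U\\ \min U\in A}}e(A)\bigl(-|A|q+\sigma(A)t\bigr)\,\Phi_{U\setminus A}\bigl(q+r\sigma(A),\,t+r|A|\bigr)\;=\;\Phi_U
\]
for every finite set $U$ of positive integers, where $\Phi_U$ is the analogue of $\Phi$ for the label set $U$.

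The heart of the matter, and the step I expect to be the main obstacle, is this last identity. It holds only after a cancellation that couples different set partitions: already for $|U|=2$ the contribution of $\{1\}\sqcup\{2\}$ and that of $\{1,2\}$ to $EC_2$ are each non-symmetric in $z_1,z_2$, and their non-symmetric parts cancel precisely because $\{1,2\}$ arises from $\{1\}\sqcup\{2\}$ by merging the two blocks. What must be understood is how the weight $e(A)=e_{\min A}(A)$ -- which is \emph{not} symmetric in the variable $z_{\min A}$ -- conspires with the parameter shifts $q\mapsto q+r\sigma(A)$, $t\mapsto t+r|A|$ and with the symmetric polynomial $\Phi_{U\setminus A}$ to reassemble into the fully symmetric $\Phi_U$; concretely, one wants a combinatorial identity for the $e$-weights relating partitions that differ by a merge of blocks, which I would look for either by expanding both sides over decorated ordered set partitions and constructing a sign-reversing involution, or by a telescoping argument along the ``merge'' partial order on partitions. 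The substitution $y_v:=-(rz_v+1)$, under which the $i$-th factor $r\sum_{l<i}(z_{u_l}-z_{u_i})-i$ of $e_b(B)$ becomes $i\,y_{u_i}-\sum_{l<i}y_{u_l}-i$, should streamline the bookkeeping. Should this cancellation resist a direct attack, the natural fallback is to exploit the quiver-Grassmannian interpretation of $EC_n$ indicated in Section~2, in the hope that there the $S_n$-symmetry becomes geometrically transparent -- for instance through a group action permuting the relevant cells -- and so more accessible than through the identity above.
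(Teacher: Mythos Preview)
The paper does not prove this statement: it is stated as an open conjecture and only verified computationally for $n\le 7$. There is therefore no proof in the paper to compare against.

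Your framework is sound as far as it goes. The rewriting of the $j$-th factor via $\Sigma_{j-1}$ and $N_{j-1}$ is correct, the recursion for $G$ obtained by conditioning on the block containing $\min U$ is correct, and your candidate $\Phi$ is indeed manifestly symmetric: summing out the markings $b_j$ yields $\prod_j\widehat e(B_j)$, and relabelling then permutes the set of ordered set partitions while leaving the weights $\frac{1}{k!\,|B_1|\cdots|B_k|}$ unchanged. A direct check confirms $n!\,\Phi=EC_n$ for $n=1,2$, so the candidate is plausible. The logical reduction---$EC_n=n!\,\Phi$ for all $n$ is equivalent, by induction, to your displayed identity holding for every finite $U$---is also correct.

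But what you have is a reformulation, not a proof. You explicitly leave the key identity unproven and call it ``the main obstacle''; the suggestions you list (sign-reversing involution, telescoping along the merge order, the substitution $y_v=-(rz_v+1)$, or a geometric argument) are directions, not arguments. There is a structural reason to expect the identity to be hard: $\Phi$ does not satisfy a recursion of the same shape as $G$'s. Conditioning $\Phi_U$ on its first block $B_1$ gives a sum over \emph{all} nonempty $B_1\subseteq U$, weighted by $\widehat e(B_1)/|B_1|$, and the global factor $1/k!$ does not peel off to $1/(k-1)!$, so the inner sum is not $\Phi_{U\setminus B_1}$ with shifted parameters. Matching this against $G$'s recursion---where the first block is forced to contain $\min U$ and carries the non-symmetric weight $e(A)=e_{\min A}(A)$---therefore requires exactly the cross-partition cancellation you describe, and nothing in the proposal establishes it. As it stands, the conjecture remains open in your writeup just as in the paper.
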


\begin{conj}\label{signcohconj}
Suppose that Conjecture~\ref{symconj} is true. When expanded in terms of monomial $($resp. Schur, complete homogeneous, elementary and power sum$)$ symmetric polynomials, $EC_n (Z; q, t, r)$ is sign-coherent, i.e., for any partition $\lambda$, its  coefficient of $m_\lambda$ $($resp. $s_\lambda,h_\lambda, e_\lambda, p_\lambda)$  is a polynomial of $q,t,r$ with either all nonnegative or all nonpositive integer coefficients, and the sign depends only on the parity of the number of parts in $\lambda$. 
\end{conj}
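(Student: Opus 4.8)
The plan is to assume Conjecture~\ref{symconj} and to argue inside the ring of symmetric polynomials, working one homogeneous degree at a time: since $EC_n$ is inhomogeneous in $Z$, write $EC_n=\sum_{d=0}^{n}EC_n^{(d)}$ with $EC_n^{(d)}$ homogeneous of degree $d$, so that for $|\lambda|=d$ only $EC_n^{(d)}$ contributes to the coefficient of $m_\lambda$, $s_\lambda$, $h_\lambda$, $e_\lambda$ or $p_\lambda$. I would then split the five assertions into two groups — the monomial, Schur, complete-homogeneous and power-sum expansions, which I claim are equivalent, and the elementary expansion, which needs a separate argument.

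For the first group it suffices to prove that $EC_n$ is sign-coherent in the monomial basis. The reason is the sign pattern of the change-of-basis matrices of $\Lambda$: writing $m_\mu=\sum_\lambda c^X_{\mu\lambda}X_\lambda$ for $X\in\{s,h,p\}$, every nonzero entry satisfies $\operatorname{sign}(c^X_{\mu\lambda})=(-1)^{\ell(\mu)+\ell(\lambda)}$ (for $p$ this is the M\"obius function of the partition lattice via Newton's identities; for $h$ it is the inverse of the ``number of nonnegative integer matrices'' matrix; for $s$ it is the signed rim-hook rule for the inverse Kostka numbers). Hence if $EC_n=\sum_\mu c_\mu m_\mu$ with $\operatorname{sign}(c_\mu)=\varepsilon(-1)^{\ell(\mu)}$ for a fixed $\varepsilon\in\{\pm 1\}$ (on small $n$ one finds $\varepsilon=(-1)^n$), then the coefficient of $X_\lambda$ is $\sum_\mu c_\mu c^X_{\mu\lambda}$, all of whose terms have the common sign $\varepsilon(-1)^{\ell(\lambda)}$; so it is a polynomial in $q,t,r$ whose coefficients all have that one sign, which depends only on the parity of $\ell(\lambda)$. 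That is Conjecture~\ref{signcohconj} for $m,s,h,p$ at once.

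The heart of the matter is therefore monomial sign-coherence of $EC_n$, and here I would use the provenance of the polynomial. As indicated in Section~2, $(-1)^nEC_n$ is (an expression for) the Euler characteristic of a cell of a quiver Grassmannian, and I would try to produce a finer decomposition of that cell — an affine paving, or the strata of a Bia\l ynicki-Birula decomposition for a suitable torus action — in which each piece contributes a monomial in $q,t,r$ and in $z_1,\dots,z_n$ determined by its combinatorial ``type''. Given such a decomposition, the coefficient of $m_\lambda$ in $(-1)^nEC_n$ would be, up to a uniform sign, the weighted count of strata whose type has shape $\lambda$ — manifestly a polynomial in $q,t,r$ with nonnegative integer coefficients — and the dependence of the sign on the parity of $\ell(\lambda)$ should follow from a congruence tying the codimension of a stratum (an Euler-characteristic sign) to the number of parts of its type. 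The work is to reconcile such a decomposition with the formula as written, i.e.\ to construct a weight-preserving, sign-reversing involution on the index set of the defining sum — the pairs consisting of a set partition $P$ together with the permutation data entering $e(P)$ — whose non-fixed points cancel in pairs and whose fixed points are exactly the strata. Building this involution is, I expect, the main obstacle: the given expression is not even visibly symmetric, so essentially all of Conjecture~\ref{symconj} has to be absorbed into it, and one first has to put the falling-factorial averages $e(T)=\tfrac1{m!}\sum_{\text{perms}}\prod_{i=1}^{m-1}\!\big(r(\cdots)-i\big)$ into a form — they are Stirling-type and should telescope under the average — that makes the cancellation visible.

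The elementary expansion does not follow from the monomial one, because the transition $m_\mu=\sum_\lambda c^e_{\mu\lambda}e_\lambda$ carries the mixed sign $(-1)^{\ell(\lambda)+\mu_1}$ rather than $(-1)^{\ell(\mu)+\ell(\lambda)}$, so the terms of $\sum_\mu c_\mu c^e_{\mu\lambda}$ need not share a sign. I would handle it through $e_\lambda=\omega(h_\lambda)$: the coefficient of $e_\lambda$ in $EC_n$ equals the coefficient of $h_\lambda$ in $\omega(EC_n)$, so it would suffice to establish the analogous complete-homogeneous sign-coherence of $\omega(EC_n)$ by running the argument of the previous paragraph for $\omega(EC_n)$ — which would presumably require either a cell decomposition adapted to it or a direct identification of $\omega(EC_n)$ with another natural symmetric function. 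This is the second place I expect real difficulty; and it is worth noting that, tracked through $\omega$, the parity controlling the $e$-coefficients comes out as that of $\lambda_1$ (the number of parts of the conjugate $\lambda'$) rather than that of $\ell(\lambda)$.
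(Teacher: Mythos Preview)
The paper does not prove this statement: it is stated as a conjecture, and the only evidence offered is the sentence ``Conjecture~\ref{symconj} and Conjecture~\ref{signcohconj} are true for $n\le 7$'', which refers to a computer check (the Macaulay2 code appended after the bibliography). There is therefore no proof in the paper to compare your proposal against. More importantly, your proposal is not a proof either but a programme, and it contains a concrete error together with a misreading of the geometric link.

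First, the reduction ``monomial sign-coherence $\Rightarrow$ Schur sign-coherence'' is false as stated. You claim that in $m_\mu=\sum_\lambda c^{s}_{\mu\lambda}s_\lambda$ every nonzero entry has sign $(-1)^{\ell(\mu)+\ell(\lambda)}$. This already fails at $n=4$: one computes
\[
m_{(3,1)}=s_{(3,1)}-s_{(2,2)}-s_{(2,1,1)}+2\,s_{(1^4)},
\]
so $c^{s}_{(3,1),(2,2)}=-1$, while $(-1)^{\ell((3,1))+\ell((2,2))}=(-1)^{2+2}=+1$. The E\u{g}ecio\u{g}lu--Remmel special rim-hook formula you invoke gives $K^{-1}_{\mu\lambda}=\sum_T(-1)^{\mathrm{ht}(T)}$, a signed sum that need not reduce to $(-1)^{\ell(\mu)+\ell(\lambda)}$. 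Hence the five assertions in the conjecture are genuinely independent; you cannot collapse $s$ (and by the same token not all of $m,s,h,p$) onto $m$ by a transition-matrix sign argument.

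Second, the geometric step rests on a misreading of Section~\ref{cluster}. The paper does \emph{not} say that $(-1)^nEC_n$ is an Euler characteristic. Conjecture~\ref{ECecconj} says only that a specific \emph{weighted sum} of values $EC_{e_2}(z_1,\dots,z_{e_2};-a_{n-2},-a_{n-3},c)$, with $z_1+\cdots+z_{e_2}=e_1$ and weights $\prod\binom{c}{z_i}$, equals $\chi(\mathrm{Gr}_{(e_1,e_2)}(M(n)))$. The parameters $q,t,r$ are specialised to integers depending on $n$ and $c$, and the $z_i$ are summed out; there is no variety whose cells are indexed so that their count is the coefficient of $m_\lambda$ in $EC_n(Z;q,t,r)$ for generic $q,t,r$. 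So an affine paving or a Bia\l{}ynicki--Birula decomposition of a quiver Grassmannian, even if you had one, would not by itself yield monomial sign-coherence of $EC_n$ as a polynomial in $q,t,r$.

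Finally, you yourself flag the two remaining steps --- the sign-reversing involution on the index set of the defining sum, and the separate treatment of the $e$-expansion via $\omega$ --- as the places where ``the main obstacle'' and ``real difficulty'' lie. That is an honest assessment, but it means what you have written is an outline of what a proof might look like, not a proof; and since the paper offers none, the conjecture remains open.
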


Conjecure~\ref{symconj}  and Conjecure~\ref{signcohconj} are true for $n\leq 7.$

\begin{exmp}\label{ECexample}
$$EC_1 (Z; q, t, r)=e(\{1\}) (-q+tz_1)=-q+tz_1.$$
$$\aligned &EC_2 (Z; q, t, r)\\ 
&=2e(\{1\}\sqcup\{2\}) (-q+tz_1)(-q+tz_2-r(z_1-z_2)) + 2e(\{1,2\})(-2q+t(z_1+z_2)) \\
&=2e(\{1\})e(\{2\}) (-q+tz_1)(-q+tz_2-r(z_1-z_2)) + 2e(\{1,2\})(-2q+t(z_1+z_2)) \\
&=2 (-q+tz_1)(-q+tz_2-r(z_1-z_2)) + 2\frac{r(z_1-z_2)-1}{2}(-2q+t(z_1+z_2)) \\
&=-tr(z_1^2+z_2^2) + (2t^2+2tr)z_1z_2 + (-2qt-t)(z_1+z_2)+2q^2+2q.
\endaligned$$
$$\aligned &EC_3 (Z; q, t, r)\\ &=6e(\{1\}\sqcup\{2\}\sqcup\{3\})(-q+tz_1)(-q+tz_2-r(z_1-z_2))(-q+tz_3-r(z_1-z_3+z_2-z_3))\\
&\,\,\,\,\,\, +6e(\{1,2\}\sqcup\{3\})(-2q+t(z_1+z_2))(-q+tz_3-r(z_1+z_2-2z_3))\\
 &\,\,\,\,\,\, +6e(\{1,3\}\sqcup\{2\})(-2q+t(z_1+z_3))(-q+tz_2-r(z_1+z_3-2z_2))\\
  &\,\,\,\,\,\, +6e(\{1\}\sqcup\{2,3\})(-q+tz_1)(-2q+t(z_2+z_3)-r(2z_1-z_2-z_3))\\
  &\,\,\,\,\,\, +6e(\{1,2,3\})(-3q+t(z_1+z_2+z_3))\\
  &= 2tr^2(z_1^3+z_2^3+z_3^3) + (-3t^2r-3tr^2)(z_1^2z_2+\cdots+z_3^2z_1)+ (6t^3+18t^2r+12tr^2)z_1z_2z_3\\
  &\,\,\,\,\,\, +(6qtr + 6tr)(z_1^2+z_2^2+z_3^2) + (-6qt^2-6qtr-6t^2-6tr)(z_1z_2+z_2z_3+z_3z_1)\\
   &\,\,\,\,\,\, + (6q^2 t + 12qt + 4t)(z_1+z_2+z_3)-6q^3-18q^2-12q\\
   &= 2tr^2 s_{(3)}Z + (-3t^2r-5tr^2)s_{(2,1)}Z + (6t^3+24t^2r+20tr^2)s_{(1,1,1)}Z\\
   &\,\,\,\,\,\, +(6qtr + 6tr)s_{(2)}Z+ (-6qt^2-12qtr-6t^2-12tr)s_{(1,1)}Z\\
   &\,\,\,\,\,\, + (6q^2 t + 12qt + 4t)s_{(1)}Z-6q^3-18q^2-12, 
\endaligned$$where $s_{\lambda}Z$ are Schur symmetric polynomials.  \qed
\end{exmp}

\section{The Euler Characteristic of quiver Grassmannians}\label{cluster}
In this section, we explain how $EC$-polynomials arise in the context of cluster algebras. 

First, we define cluster algebras. To avoid too much distraction, we restrict ourselves to the rank 2 case. Let $b,c$ be positive integers and $x_1, x_2$ be indeterminates. The (coefficient-free) \emph{cluster algebra} $\mathcal{A}(b,c)$ is the subring of the field $\mathbb{Q}(x_1,x_2)$ generated by the elements $x_m$, $m\in \mathbb{Z}$ satisfying the recurrence relations:
$$
x_{n+1} =\left\{ \begin{array}{cl}
(x_n^b +1)/{x_{n-1}} & \text{ if } n \text{ is odd,} \\
\text{ } & \text{ }\\
{(x_n^c +1)}/{x_{n-1}} & \text{ if } n \text{ is even.}
\end{array}      \right.
$$ 
The elements $x_m$, $m\in \mathbb{Z}$ are called the cluster variables of $\mathcal{A}(b,c)$.  Fomin and Zelevinsky \cite{FZ} introduced cluster algebras and proved the Laurent phenomenon whose special case says that for every $m\in  \mathbb{Z}$ the cluster variable $x_m$ can be expressed as a Laurent polynomial of $x_1^{\pm 1}$ and  $x_2^{\pm 1}$. In addition, they conjectured that the coefficients of monomials in the Laurent expression of $x_m$ are non-negative integers.  When $bc\leq 4$, Sherman-Zelevinsky \cite{SZ} and independently Musiker-Propp \cite{MP} proved the conjecture. Moreover in this case the explicit combinatorial formulas for the coefficients are known.  In \cite{L}, we find a new  formula for the coefficients when $b=c\geq 2$.   

Before we state the main results of \cite{L}, we need some definitions.

\begin{defn}\label{modifiedbinomialcoeff}
For arbitrary (possibly negative) integers $A, B$, we define the modified binomial coefficient as follows.
$$\left[\begin{array}{c}{A } \\{B} \end{array}\right] := \left\{ \begin{array}{ll}  \prod_{i=0}^{A-B-1} \frac{A-i}{A-B-i}, & \text{ if }A > B\\ \, & \,  \\   1, & \text{ if }A=B \\ \, & \, \\  0, & \text{ if }A<B.  \end{array}  \right.$$ \qed
\end{defn} 

If $A\geq 0$ then $\left[\begin{array}{c}{A } \\{B} \end{array}\right]=\gchoose{A}{A-B}$ is just the usual binomial coefficient. In general, $\gchoose{A}{A-B}$ is equal to the generalized binomial coefficient ${A \choose B}$. But in this paper we use our modified binomial coefficients to  avoid too complicated expressions.

\begin{defn}
Let $\{a_n\}$ be the sequence  defined by the recurrence relation $$a_n=ca_{n-1} -a_{n-2},$$ with the initial condition $a_1=0$, $a_2=1$. If $c=2$ then $a_n=n-1$. When $c>2$, it is easy to see that 
$$
a_n= \frac{1}{\sqrt{c^2-4}  }\left(\frac{c+\sqrt{c^2-4}}{2}\right)^{n-1} - \frac{1}{\sqrt{c^2-4}  }\left(\frac{c-\sqrt{c^2-4}}{2}\right)^{n-1} = \sum_{i\geq 0} (-1)^i { {n-2-i} \choose i }c^{n-2-2i}.
$$ \qed
\end{defn}

Our main result in \cite{L} is the following.

\begin{thm}\label{mainthm} Assume that $b=c\geq 2$. Let $n\geq 3$. Then
\begin{equation}\label{mainformula}\aligned
&x_n= x_1^{-a_{n-1}} x_2^{-a_{n-2}} \sum_{e_1,e_2} \sum_{ t_0,t_1,\cdots,t_{n-4}} \left[ \left( \prod_{i=0}^{n-4} \left[\begin{array}{c}{{a_{i+1} - cs_i    } } \\{t_i} \end{array}\right]   \right) \right.\\
&\,\,\,\,\,\,\,\,\,\,\,\,\,\times \left.\gchoose{a_{n-2} - cs_{n-3} }{a_{n-2} - cs_{n-3}-e_2+s_{n-4}}\gchoose{-a_{n-3} + c e_2 }{  -a_{n-3} + c e_2-e_1+s_{n-3} } x_1^ {c(a_{n-2}-e_{2})} x_2^{ce_{1}}\right],
\endaligned\end{equation}
where $$
s_i=\sum_{j=0}^{i-1} a_{i-j+1}t_j,
$$
and the summations run over all integers $e_1,e_2, t_0,...,t_{n-4}$ satisfying 
\begin{equation}\label{cond501}\left\{
\begin{array}{l} 0\leq t_i \leq a_{i+1} - cs_i \, (0\leq i\leq n-4),\\
 0\leq a_{n-2} - cs_{n-3}-e_2+s_{n-4}\leq a_{n-2} - cs_{n-3}, \text{ and }  \\
e_2 a_{n-1} -e_1 a_{n-2}\geq 0.      
 \end{array} \right.\end{equation}
\end{thm}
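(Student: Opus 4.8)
The natural approach is to prove \eqref{mainformula} by induction on $n$, using the exchange relation in its multiplicative form
$$x_{n+1}\,x_{n-1}=x_n^{\,c}+1,$$
which holds for \emph{every} $n$ precisely because $b=c$. Two preliminary points must be settled first. One has to check that the right-hand side of \eqref{mainformula} is a genuine finite Laurent polynomial: by Definition~\ref{modifiedbinomialcoeff} a modified binomial coefficient vanishes unless $0\le B\le A$, so the requirement that the last two factors $\gchoose{a_{n-2}-cs_{n-3}}{a_{n-2}-cs_{n-3}-e_2+s_{n-4}}$ and $\gchoose{-a_{n-3}+ce_2}{-a_{n-3}+ce_2-e_1+s_{n-3}}$ of the summand be nonzero, together with the inequalities \eqref{cond501}, confines $e_1,e_2$ and every $t_i$ to a finite range (recall that $s_i$ is determined by $t_0,\dots,t_{i-1}$). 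Then one needs the base of the induction: verify \eqref{mainformula} directly for $n=3$, where there are no variables $t_i$ and the right-hand side must collapse to $x_3=x_1^{-1}(x_2^{\,c}+1)$; each later inductive step builds $x_{n+1}$ from the formulas for $x_n$ and $x_{n-1}$, with $x_2$ read as the initial cluster variable when $n-1=2$.

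For the inductive step I would first clear the common Laurent prefactor. Writing the inductively known expansions of $x_{n-1}$ and $x_n$, and the conjectured expansion of $x_{n+1}$, each in the shape $x_1^{-a_{m-1}}x_2^{-a_{m-2}}\cdot(\text{polynomial in }x_1,x_2)$ --- here the constraints \eqref{cond501} force the relevant exponents to be nonnegative, so ``polynomial part'' makes sense --- the identity $a_n=ca_{n-1}-a_{n-2}$ shows that the prefactor of $x_{n+1}x_{n-1}$ coincides with that of $x_n^{\,c}$, namely $x_1^{-ca_{n-1}}x_2^{-ca_{n-2}}$. It then remains to prove an identity between honest polynomials: the product of the polynomial parts attached to $x_{n+1}$ and $x_{n-1}$ on one side, the $c$-th power of the polynomial part of $x_n$ plus the single monomial coming from the ``$+1$'' on the other. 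Expanding $x_n^{\,c}$ by the multinomial theorem over $c$-tuples of terms of $x_n$ --- each term carrying its own vector $(e_1,e_2,t_0,\dots,t_{n-4})$, with monomials combining additively and modified binomial coefficients multiplicatively --- and then comparing the coefficient of each monomial $x_1^{c\alpha}x_2^{c\beta}$, one reduces the theorem to a family of identities among nested sums of products of modified binomial coefficients. In this matching the multinomial index of the $c$-th power plays the role of the \emph{new} summation variable $t_{n-3}$ in the formula for $x_{n+1}$, and the recursively defined partial sums $s_i=\sum_{j=0}^{i-1}a_{i-j+1}t_j$ have to be tracked through the substitution; the resulting identities are iterated Chu--Vandermonde-type convolutions, with the recursion $a_n=ca_{n-1}-a_{n-2}$ governing how the exponents propagate from one layer to the next.

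The main obstacle is exactly this combinatorial core. The summation ranges interlock, because each $s_i$ depends on $t_0,\dots,t_{i-1}$, so one cannot sum the layers independently; and because some of the modified binomial coefficients --- such as $\gchoose{-a_{n-3}+ce_2}{-a_{n-3}+ce_2-e_1+s_{n-3}}$, whose upper argument can be negative --- fall outside the classical range, the convolution identities must be invoked in their generalized rather than their classical form. The plan would be to isolate a single ``one-layer'' identity --- a weighted Vandermonde convolution expressing the multinomial sum obtained from raising one term-sum to the $c$-th power as precisely one extra layer of the target formula --- prove it in isolation, for instance by generating functions built from the rational function $1/(1-cx+x^2)$, and then iterate it through the induction; making the layer indexing match up cleanly is where the real bookkeeping lies.

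Two alternative routes end at the same obstacle. Granting the Lee--Li--Zelevinsky theorem that every cluster monomial of a rank-$2$ cluster algebra lies in the greedy basis, it is enough to verify that the coefficients on the right of \eqref{mainformula} obey the recursion characterizing greedy coefficients --- once more a modified-binomial identity of the type above. More geometrically, each $x_n$ with $n\ge 3$ is the Caldero--Chapoton cluster character of an exceptional (preprojective or preinjective) module over the $c$-Kronecker quiver, so the coefficients in \eqref{mainformula} are Euler characteristics of quiver Grassmannians, and \eqref{mainformula} would follow from an explicit cell decomposition of those Grassmannians --- the viewpoint from which the $EC$-polynomials studied in Section~\ref{cluster} arise.
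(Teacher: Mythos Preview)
The present paper does not prove Theorem~\ref{mainthm}; it is quoted as the main result of the companion paper \cite{L}. The paragraph following the statement does, however, indicate how the argument is organised, and that organisation is quite different from your inductive scheme. The bare identity \eqref{mainformula} with the constraints \eqref{cond501} removed is said to follow easily from the $F$-polynomial recursion \cite[(6.28)]{FZ4}; the inequality $e_2a_{n-1}-e_1a_{n-2}\ge0$ is then imported from \cite[Proposition~3.5]{SZ}; and the remaining inequalities in \eqref{cond501} --- equivalently, the claim that every modified binomial coefficient in \eqref{mainformula} except the last one is an ordinary nonnegative binomial coefficient --- constitute the new contribution of \cite{L}. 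Thus the paper separates ``the formula holds'' from ``the summation may be restricted to the indicated range'' and handles each piece with a tool already in the literature.

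Your direct induction on $x_{n+1}x_{n-1}=x_n^{\,c}+1$ is a reasonable alternative strategy, but as written it has a genuine gap: the ``one-layer'' Chu--Vandermonde-type identity on which the entire induction rests is never stated precisely, let alone proved. You note correctly that the upper arguments of several modified binomial coefficients can be negative, so classical Vandermonde does not apply directly, and that the interlocking of the $s_i$'s prevents summing the layers independently; but acknowledging the obstacle is not the same as overcoming it. Until that identity is written down and verified --- or replaced by a generating-function argument that actually carries the induction --- the proposal remains a plan rather than a proof. The paper's route trades this combinatorial headache for two citations and one nonnegativity argument; yours would be more self-contained, but only once the missing lemma is supplied.
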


Since $\gchoose{A}{B}\neq 0$ if and only if $A\geq B$, we may add the condition $0\geq -e_1+s_{n-3}$ to (\ref{cond501}). Then the summation in the statement is guaranteed to be a finite sum. A referee remarks that $F$-polynomials have similar expressions.  As he pointed out, the expression without (\ref{cond501}) is an easy consequence of the  formula (6.28) in the paper \cite{FZ4} by Fomin and Zelevinsky, and the one with $e_2 a_{n-1} -e_1 a_{n-2}\geq 0$  is a consequence of \cite[Proposition 3.5]{SZ} in the paper by Sherman and Zelevinsky. Our contribution is to show that  all the modified binomial coefficients in (\ref{mainformula}) except for the last one are non-negative.

As a corollary to Theorem~\ref{mainthm}, we obtain a new expression for the Euler-Poincar\'{e} characteristic of  the variety $\text{Gr}_{(e_1,e_2)}(M(n))$ of all subrepresentations of dimension $(e_1,e_2)$ in a unique (up to an isomorphism) indecomposable $Q_c$-representation $M(n)$ of dimension $(a_{n-1}, a_{n-2})$, where  $Q_c$ is the generalized Kronecker  quiver with  two vertices 1 and 2, and $c$ arrows from 1 to 2. We use a result of Caldero and  Zelevinsky \cite[Theorem 3.2 and (3.5)]{CZ}.

\begin{thm}[Caldero and  Zelevinsky]
The cluster variable $x_n$ is equal to 
$$
x_1^{-a_{n-1}} x_2^{-a_{n-2}}\sum_{e_1,e_2} \chi(\emph{Gr}_{(e_1,e_2)}(M(n)))x_1^ {c(a_{n-2}-e_{2})} x_2^{ce_{1}}.
$$
\end{thm}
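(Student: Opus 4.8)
This theorem is precisely the specialization to the quiver $Q_c$ of the Caldero--Zelevinsky cluster-character formula (the rank-$2$ instance of the Caldero--Chapoton map), so the plan is to recall that formula and then verify that the exponents come out exactly as displayed. For an acyclic quiver $Q$ on the vertex set $\{1,2\}$ and a finite-dimensional representation $M$ with $\underline{\dim}\,M=(m_1,m_2)$, the cluster character is
$$
X_M \;=\; \sum_{(e_1,e_2)} \chi\big(\text{Gr}_{(e_1,e_2)}(M)\big)\; \prod_{i=1}^{2} x_i^{-\langle \mathbf{e},\mathbf{s}_i\rangle-\langle \mathbf{s}_i,\,\underline{\dim}\,M-\mathbf{e}\rangle},
$$
where $\mathbf{e}=(e_1,e_2)$, the vectors $\mathbf{s}_1=(1,0)$ and $\mathbf{s}_2=(0,1)$ are the dimension vectors of the two simple representations, and $\langle\cdot,\cdot\rangle$ denotes the Euler form of $Q$. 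Caldero and Zelevinsky show that for the indecomposable representation attached (via the preprojective, equivalently preinjective, component) to a cluster variable one has $X_M=x_n$; granting this, it remains to identify that representation and to compute the exponents for $Q_c$.

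The Euler form of $Q_c$ is $\langle (d_1,d_2),(e_1,e_2)\rangle = d_1e_1+d_2e_2-c\,d_1e_2$, since $Q_c$ has $c$ arrows from $1$ to $2$ and none in the other direction. Taking $\underline{\dim}\,M=(a_{n-1},a_{n-2})$, at vertex $1$ the exponent is
$$
-\langle \mathbf{e},\mathbf{s}_1\rangle-\langle \mathbf{s}_1,(a_{n-1}-e_1,\,a_{n-2}-e_2)\rangle \;=\; -e_1-\big((a_{n-1}-e_1)-c(a_{n-2}-e_2)\big)\;=\;-a_{n-1}+c(a_{n-2}-e_2),
$$
and at vertex $2$ it is
$$
-\langle \mathbf{e},\mathbf{s}_2\rangle-\langle \mathbf{s}_2,(a_{n-1}-e_1,\,a_{n-2}-e_2)\rangle \;=\; -(e_2-c\,e_1)-(a_{n-2}-e_2)\;=\;-a_{n-2}+c\,e_1.
$$
Substituting these two exponents back into the cluster-character formula yields exactly $x_1^{-a_{n-1}}x_2^{-a_{n-2}}\sum_{e_1,e_2}\chi\big(\text{Gr}_{(e_1,e_2)}(M(n))\big)\,x_1^{\,c(a_{n-2}-e_2)}x_2^{\,c e_1}$, which is the asserted identity.

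The genuinely substantive input --- and the step I expect to be the main obstacle --- is the identification of $M(n)$: that the indecomposable $Q_c$-representation whose cluster character equals the cluster variable $x_n$ is the unique (up to isomorphism) preprojective indecomposable of dimension vector $(a_{n-1},a_{n-2})$. The recursion $a_m=c\,a_{m-1}-a_{m-2}$ defining $\{a_m\}$ is precisely the action of (the inverse of) the Coxeter transformation on dimension vectors in the preprojective component of $\mathrm{mod}\,Q_c$, which is what forces these particular dimension vectors to appear, while the bijection between the cluster variables $x_n$ $(n\geq 3)$ and these indecomposables --- together with the identity $X_{M(n)}=x_n$ --- is exactly \cite[Theorem 3.2 and (3.5)]{CZ}. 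In practice, then, one does not reprove this: the theorem as stated \emph{is} the $Q_c$-specialization of Caldero and Zelevinsky's result, and the only computation required is the Euler-form bookkeeping above.
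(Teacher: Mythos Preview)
Your proposal is correct. In fact, the paper gives no proof of this statement at all: it is stated as a theorem of Caldero and Zelevinsky with a bare citation to \cite[Theorem 3.2 and (3.5)]{CZ}, and is then invoked as a black box in the proof of the subsequent corollary. Your write-up goes further by actually spelling out the specialization --- writing down the Euler form of $Q_c$, computing the two exponents, and noting that the identification $X_{M(n)}=x_n$ with $\underline{\dim}\,M(n)=(a_{n-1},a_{n-2})$ is exactly what \cite{CZ} supplies. That bookkeeping is accurate, and your assessment of where the real content lies (the bijection between cluster variables and preprojective indecomposables, plus the cluster-character identity) matches how the paper uses the citation.
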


\begin{cor}\label{maincor}
Assume that $b=c\geq 2$. For any $(e_1,e_2)$ and $n\geq 3$, the Euler-Poincar\'{e} characteristic of $\emph{Gr}_{(e_1,e_2)}(M(n))$ is equal to
\begin{equation}\label{submainformula}\sum_{ t_0,t_1,\cdots,t_{n-4}} \left[ \left( \prod_{i=0}^{n-4} \left[\begin{array}{c}{{a_{i+1} - cs_i    } } \\{t_i} \end{array}\right]   \right)\gchoose{a_{n-2} - cs_{n-3} }{a_{n-2} - cs_{n-3}-e_2+s_{n-4}}\gchoose{-a_{n-3} + c e_2 }{  -a_{n-3} + c e_2-e_1+s_{n-3} } \right],   
\end{equation}
where  the summation runs over all integers $t_0,...,t_{n-4}$ satisfying 
\begin{equation}\label{cond512}\left\{
\begin{array}{l} 0\leq t_i \leq a_{i+1} - cs_i \, (0\leq i\leq n-4), \text{ and } \\
  0\leq a_{n-2} - cs_{n-3}-e_2+s_{n-4}\leq a_{n-2} - cs_{n-3}.      
 \end{array} \right.\end{equation}\end{cor}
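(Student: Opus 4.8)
The plan is to derive Corollary~\ref{maincor} directly from Theorem~\ref{mainthm} together with the Caldero--Zelevinsky formula. Both expressions for $x_n$ are Laurent polynomials in $x_1, x_2$, so I would first observe that the common prefactor $x_1^{-a_{n-1}} x_2^{-a_{n-2}}$ and the common monomials $x_1^{c(a_{n-2}-e_2)} x_2^{ce_1}$ set up a bijection between the terms indexed by $(e_1,e_2)$ on the two sides. The strategy is then to compare, for each fixed pair $(e_1,e_2)$, the coefficient of $x_1^{c(a_{n-2}-e_2)} x_2^{ce_1}$, and to conclude that $\chi(\text{Gr}_{(e_1,e_2)}(M(n)))$ equals the inner sum over $t_0,\dots,t_{n-4}$ in \eqref{mainformula}. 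Here one must be slightly careful: distinct pairs $(e_1,e_2)$ could in principle contribute the same monomial $x_1^{c(a_{n-2}-e_2)}x_2^{ce_1}$, but since the map $(e_1,e_2)\mapsto (c(a_{n-2}-e_2), ce_1)$ is visibly injective, the coefficient extraction is unambiguous and the matching is term-by-term.

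Next I would reconcile the index set of the summation. In \eqref{mainformula} the sum runs over $e_1,e_2,t_0,\dots,t_{n-4}$ subject to \eqref{cond501}, whose three conditions are: the box constraints $0\le t_i\le a_{i+1}-cs_i$, the constraint $0\le a_{n-2}-cs_{n-3}-e_2+s_{n-4}\le a_{n-2}-cs_{n-3}$, and the positivity $e_2 a_{n-1}-e_1 a_{n-2}\ge 0$. For the corollary we fix $(e_1,e_2)$, so the third condition is no longer a running constraint but simply selects which pairs $(e_1,e_2)$ occur; I must argue that for a pair violating $e_2 a_{n-1}-e_1 a_{n-2}\ge 0$ the quiver Grassmannian $\text{Gr}_{(e_1,e_2)}(M(n))$ is empty (hence has Euler characteristic $0$) and also that the inner sum over $t_i$ in \eqref{submainformula} vanishes, so that dropping that condition from \eqref{cond512} is harmless. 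The first point is essentially the content of \cite[Proposition 3.5]{SZ} — a subrepresentation of dimension $(e_1,e_2)$ in the indecomposable $M(n)$ of dimension $(a_{n-1},a_{n-2})$ forces $e_2 a_{n-1}-e_1 a_{n-2}\ge 0$; the second point can be seen because the generalized binomial coefficient $\gchoose{-a_{n-3}+ce_2}{-a_{n-3}+ce_2-e_1+s_{n-3}}$ (equivalently $\binom{-a_{n-3}+ce_2}{-e_1+s_{n-3}}$) vanishes outside the allowed range, and more precisely the sign/vanishing behavior of this last factor is exactly what encodes the Euler characteristic being zero. So the two conditions in \eqref{cond512} are precisely \eqref{cond501} with the running pair $(e_1,e_2)$ now fixed and the superfluous constraint removed.

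I would then record the remaining bookkeeping: the finiteness of the $t$-sum, which the excerpt already notes follows from adjoining $0\ge -e_1+s_{n-3}$ (automatic from $\gchoose{A}{B}\ne 0\iff A\ge B$), and the fact that $s_i=\sum_{j=0}^{i-1}a_{i-j+1}t_j$ is unchanged between the two statements. Putting these together, for each fixed $(e_1,e_2)$ the coefficient of $x_1^{c(a_{n-2}-e_2)}x_2^{ce_1}$ in Theorem~\ref{mainthm}'s formula is exactly the displayed sum \eqref{submainformula} over the index set \eqref{cond512}, and by Caldero--Zelevinsky this same coefficient is $\chi(\text{Gr}_{(e_1,e_2)}(M(n)))$; comparing gives the claim.

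The main obstacle I anticipate is not the algebra of matching monomials — that is essentially formal — but the justification that deleting the constraint $e_2 a_{n-1}-e_1 a_{n-2}\ge 0$ when passing from \eqref{cond501} to \eqref{cond512} genuinely loses nothing, i.e.\ that for a pair $(e_1,e_2)$ outside the Sherman--Zelevinsky cone the inner $t$-sum in \eqref{submainformula} already vanishes identically. This requires understanding the sign behavior of the last (possibly negative-entry) generalized binomial coefficient and tying it to emptiness of $\text{Gr}_{(e_1,e_2)}(M(n))$; one clean way is to invoke \cite[Proposition 3.5]{SZ} directly for the geometric side and then note that Theorem~\ref{mainthm} as stated already has that pair excluded, so that no term is being dropped at all — in which case the corollary is a pure specialization and the "obstacle" dissolves, leaving only the verification that the remaining two inequalities in \eqref{cond512} are literally the first two lines of \eqref{cond501}.
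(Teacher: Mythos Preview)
Your approach is essentially the paper's: its entire proof is the one-line remark that the corollary is an immediate consequence of Theorem~\ref{mainthm} together with the Caldero--Zelevinsky expansion, i.e.\ exactly the coefficient comparison of $x_1^{c(a_{n-2}-e_2)}x_2^{ce_1}$ that you spell out. Your extended discussion of the constraint $e_2 a_{n-1}-e_1 a_{n-2}\ge 0$ is more scrupulous than the paper, which does not isolate that point at all and simply treats the passage from \eqref{cond501} to \eqref{cond512} as part of ``immediate.''
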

 \begin{proof}
 Corollary~\ref{maincor} is an immediate consequence of Theorem~\ref{mainthm} thanks to the result of Caldero and  Zelevinsky \cite[Theorem 3.2 and (3.5)]{CZ}.
 \end{proof}

\begin{cor}\label{maincor2}
Assume that $b=c\geq 3$. Let $n\geq 3$. For any $(e_1,e_2)$ with $e_2\geq \frac{a_{n-3}}{c}$, the Euler-Poincar\'{e} characteristic of $\emph{Gr}_{(e_1,e_2)}(M(n))$ is non-negative.
\end{cor}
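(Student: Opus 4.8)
The plan is to derive Corollary~\ref{maincor2} from Corollary~\ref{maincor} by showing that, under the hypothesis $e_2 \ge a_{n-3}/c$, every single term in the sum (\ref{submainformula}) is non-negative. Since the outer sum is over tuples $(t_0,\dots,t_{n-4})$ satisfying (\ref{cond512}), and a sum of non-negative reals is non-negative, it suffices to handle one arbitrary admissible tuple at a time. Each term is a product of three kinds of factors: the modified binomial coefficients $\gchoose{a_{i+1}-cs_i}{t_i}$ for $0 \le i \le n-4$, the ordinary binomial coefficient $\gchoose{a_{n-2}-cs_{n-3}}{a_{n-2}-cs_{n-3}-e_2+s_{n-4}}$, and the ``generalized'' binomial coefficient $\gchoose{-a_{n-3}+ce_2}{-a_{n-3}+ce_2-e_1+s_{n-3}}$. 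So I would argue that each of these three factors is non-negative (given the constraints and the extra hypothesis), and conclude by multiplicativity.

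\textbf{Step 1: the first $n-3$ factors.} By the remark immediately following Theorem~\ref{mainthm}, ``all the modified binomial coefficients in (\ref{mainformula}) except for the last one are non-negative'' --- so in particular the factors $\gchoose{a_{i+1}-cs_i}{t_i}$ are non-negative. This is exactly the content extracted from \cite{L} and I would simply cite it. Alternatively, one observes directly: the constraint $0 \le t_i \le a_{i+1}-cs_i$ in (\ref{cond512}) forces the upper entry $a_{i+1}-cs_i \ge t_i \ge 0$ to be non-negative, and for $A \ge 0$ the modified binomial coefficient $\gchoose{A}{B}$ coincides with the usual binomial coefficient $\binom{A}{A-B}$ (as noted just after Definition~\ref{modifiedbinomialcoeff}), which is a non-negative integer. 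Likewise the middle factor $\gchoose{a_{n-2}-cs_{n-3}}{a_{n-2}-cs_{n-3}-e_2+s_{n-4}}$ has, by the second line of (\ref{cond512}), upper entry $\ge$ lower entry $\ge 0$, hence is an ordinary (non-negative) binomial coefficient.

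\textbf{Step 2: the last factor.} Here is where the hypothesis $e_2 \ge a_{n-3}/c$, i.e. $ce_2 - a_{n-3} \ge 0$, enters. The last factor is $\gchoose{-a_{n-3}+ce_2}{-a_{n-3}+ce_2-e_1+s_{n-3}}$, whose upper entry is $-a_{n-3}+ce_2 \ge 0$ precisely under the stated hypothesis. Once the upper entry is non-negative, the same observation from the paragraph after Definition~\ref{modifiedbinomialcoeff} applies: $\gchoose{A}{B}$ with $A \ge 0$ is the usual binomial coefficient $\binom{A}{A-B}$, which is always a non-negative integer (it is zero when $B < 0$ or $B > A$). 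Combining Steps 1 and 2, each term of (\ref{submainformula}) is a product of non-negative integers, hence non-negative; summing over the admissible $(t_0,\dots,t_{n-4})$ gives $\chi(\mathrm{Gr}_{(e_1,e_2)}(M(n))) \ge 0$.

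\textbf{The main obstacle} I anticipate is purely bookkeeping: making sure that the hypothesis $e_2 \ge a_{n-3}/c$ really does control the \emph{only} factor whose upper entry is not already forced non-negative by the constraints (\ref{cond512}), and in particular that $s_{n-3}$, $s_{n-4}$ and $e_1$ do not sneak in to spoil non-negativity of the upper entries of the other factors. The constraints (\ref{cond512}) were designed exactly so that the first $n-2$ factors have non-negative upper entries, so this should go through cleanly; the role of $c \ge 3$ (rather than $c \ge 2$) is presumably just to guarantee the sequence $\{a_n\}$ grows so that the hypothesis $e_2 \ge a_{n-3}/c$ is a genuine, non-vacuous restriction. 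I would double-check that edge case ($c=2$, where $a_n = n-1$) to see why it is excluded, but it does not affect the argument for $c \ge 3$.
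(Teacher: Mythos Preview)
Your proposal is correct and follows exactly the same approach as the paper's own proof: use the constraints (\ref{cond512}) to see that every modified binomial coefficient in (\ref{submainformula}) except the last one has non-negative upper entry (hence is an ordinary, non-negative binomial coefficient), and then invoke the hypothesis $e_2\ge a_{n-3}/c$ to make the upper entry $-a_{n-3}+ce_2$ of the last factor non-negative as well. Your closing remark about $c\ge 3$ is unnecessary speculation---nothing in the argument uses $c\ge 3$ as opposed to $c\ge 2$, and the paper's proof likewise does not; the restriction in the statement is presumably contextual (the case $bc\le 4$ being already settled by \cite{SZ} and \cite{MP}) rather than required by the proof.
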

\begin{proof}
By (\ref{cond512}), all the modified binomial coefficients except for the last one in (\ref{submainformula}) are non-negative. If $e_2\geq \frac{a_{n-3}}{c}$ then the last one also becomes non-negative. Therefore, Corollary~\ref{maincor} implies that $\chi(\text{Gr}_{(e_1,e_2)} M(n))$ is non-negative.
\end{proof}

In order to prove (or disprove) that the Euler characteristic of $\text{Gr}_{(e_1,e_2)}(M(n))$ is non-negative for $0<e_2< \frac{a_{n-3}}{c}$, we need to find another expression for the Euler characteristic, preferably an expression which could explain  a cell decomposition of $\text{Gr}_{(e_1,e_2)}(M(n))$. Conjecturally we have a better expression for this purpose, especially when $e_1$ is small.

\begin{lem}\label{e1small}
Assume that $b=c\geq 2$. If $e_1<c$ and $n\geq 4$, then the Euler-Poincar\'{e} characteristic of $\emph{Gr}_{(e_1,e_2)}(M(n))$ is equal to  
\begin{equation}\label{e1smallformula}\sum_{t_{n-4} } \gchoose{a_{n-3} }{t_{n-4}}\gchoose{a_{n-2} - ct_{n-4} }{a_{n-2} - ct_{n-4}-e_2}\gchoose{-a_{n-3} + c e_2 }{  -a_{n-3} + c e_2-e_1+t_{n-4} }.\end{equation}
\end{lem}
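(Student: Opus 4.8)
The plan is to derive (\ref{e1smallformula}) as a specialization of the general formula (\ref{submainformula}) from Corollary~\ref{maincor}, using the hypothesis $e_1<c$ to collapse most of the summation. First I would fix notation: recall that $s_i=\sum_{j=0}^{i-1}a_{i-j+1}t_j$, so that $s_0=0$, $s_1=a_2t_0=t_0$, and in general the $s_i$ are nonnegative integer combinations of the $t_j$'s with positive coefficients (since $a_m\geq 1$ for $m\geq 2$, and $a_m$ is strictly increasing for $c\geq 2$). The key observation is that in (\ref{submainformula}) the last modified binomial coefficient $\gchoose{-a_{n-3}+ce_2}{-a_{n-3}+ce_2-e_1+s_{n-3}}$ forces $s_{n-3}\leq e_1$ (otherwise $-e_1+s_{n-3}>0$ would... wait, one needs $-a_{n-3}+ce_2\geq -a_{n-3}+ce_2-e_1+s_{n-3}$, i.e. $s_{n-3}\leq e_1$) in order for the term to be nonzero. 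Hence under the hypothesis $e_1<c$ we get $s_{n-3}\leq e_1<c$, which forces $s_{n-3}\in\{0,1,\dots,c-1\}$.

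Next I would show that $s_{n-3}<c$ actually forces $t_0=t_1=\cdots=t_{n-5}=0$, leaving only $t_{n-4}$ free. Indeed, $s_{n-3}=\sum_{j=0}^{n-4}a_{n-3-j+1}t_j=\sum_{j=0}^{n-4}a_{n-2-j}t_j=a_{n-2}t_0+a_{n-3}t_1+\cdots+a_3t_{n-5}+a_2t_{n-4}$; since $a_2=1$ and $a_m\geq a_3=c$ for all $m\geq 3$ (using $c\geq 2$ so $a_3=c\geq 2>1$, and monotonicity), any nonzero $t_j$ with $j\leq n-5$ would contribute at least $a_3=c$ to the sum, contradicting $s_{n-3}<c$. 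Therefore $t_0=\cdots=t_{n-5}=0$, which in turn makes $s_0=s_1=\cdots=s_{n-4}=0$ and $s_{n-3}=a_2t_{n-4}=t_{n-4}$. I then substitute these vanishings into (\ref{submainformula}): the product $\prod_{i=0}^{n-4}\gchoose{a_{i+1}-cs_i}{t_i}$ becomes $\left(\prod_{i=0}^{n-5}\gchoose{a_{i+1}}{0}\right)\gchoose{a_{n-3}-cs_{n-4}}{t_{n-4}}=\gchoose{a_{n-3}}{t_{n-4}}$ (each $\gchoose{a_{i+1}}{0}=1$ since $a_{i+1}\geq 0$, and $s_{n-4}=0$); the middle factor $\gchoose{a_{n-2}-cs_{n-3}}{a_{n-2}-cs_{n-3}-e_2+s_{n-4}}$ becomes $\gchoose{a_{n-2}-ct_{n-4}}{a_{n-2}-ct_{n-4}-e_2}$; and the last factor $\gchoose{-a_{n-3}+ce_2}{-a_{n-3}+ce_2-e_1+s_{n-3}}$ becomes $\gchoose{-a_{n-3}+ce_2}{-a_{n-3}+ce_2-e_1+t_{n-4}}$. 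This is exactly the summand in (\ref{e1smallformula}), summed over $t_{n-4}$.

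Finally I would check that the index set matches. The conditions (\ref{cond512}) reduce: $0\leq t_i\leq a_{i+1}-cs_i$ for $i\leq n-5$ is automatically satisfied by $t_i=0$ (since $a_{i+1}\geq 0$); for $i=n-4$ it becomes $0\leq t_{n-4}\leq a_{n-3}$, which is precisely the range forced by $\gchoose{a_{n-3}}{t_{n-4}}\neq 0$; and $0\leq a_{n-2}-ct_{n-4}-e_2\leq a_{n-2}-ct_{n-4}$ is exactly the nonvanishing condition for the middle binomial coefficient. So dropping the constraints and letting $t_{n-4}$ range over all integers (as in the statement of (\ref{e1smallformula})) is harmless, since out-of-range terms vanish. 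The main obstacle I anticipate is the bookkeeping in the reduction step — specifically, verifying cleanly that $s_{n-3}<c$ propagates backward to kill all earlier $t_j$ and hence all earlier $s_i$, and being careful about the edge case $n=4$ (where there are no $t_j$ with $j\leq n-5=-1$, so the reduction is vacuous and (\ref{e1smallformula}) holds directly with the single variable $t_0$). One should also double-check that the hypothesis $e_1<c$ is used correctly: it is exactly what guarantees $s_{n-3}\leq e_1<c$, closing the argument; without it, higher $t_j$ could survive and the formula would not simplify.
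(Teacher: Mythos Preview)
Your proposal is correct and follows essentially the same approach as the paper's proof: both start from Corollary~\ref{maincor}, use the last modified binomial to force $s_{n-3}\le e_1<c$, observe that any $t_j\ge 1$ with $j\le n-5$ would make $s_{n-3}\ge a_3=c$, and then substitute $t_0=\cdots=t_{n-5}=0$, $s_{n-4}=0$, $s_{n-3}=t_{n-4}$ into (\ref{submainformula}). Your write-up is more detailed (explicitly handling the index-set reduction and the $n=4$ edge case), but the argument is the same.
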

\begin{proof}
By Corollary~\ref{maincor}, the Euler characteristic is equal to (\ref{submainformula}), where $ t_i \geq 0 \, (0\leq i\leq n-4)$.  If $t_i\geq 1$ for some $0\leq i\leq n-5$, then $s_{n-3}=\sum_{j=0}^{n-4} a_{n-2-j}t_j\geq a_3=c>e_1$, which implies $\gchoose{-a_{n-3} + c e_2 }{  -a_{n-3} + c e_2-e_1+s_{n-3} }=0.$ So we can assume that $t_i=0$ for $0\leq i\leq n-5$. Then we have  $s_{n-4}=0$, $s_{n-3}=t_{n-4}$, and all the modified binomial coefficients except for the last three are 1. Therefore, (\ref{submainformula}) reduces to (\ref{e1smallformula}).
\end{proof}

We will need the following standard fact later, whose proof will be omitted.

\begin{lem}\label{stfact}
Let $A,B$ be any $($possibly negative$)$ integers, and let $n$ be any positive integer. Then 
$$\frac{d^{n}}{dy^{n}}(1+y^{A})^{B}=n!\sum_{i=1}^n \sum_{\tiny{\begin{array}{c} j_1+\cdots+j_i=n\\j_1,\cdots,j_i\geq 1  \end{array}}}\gchoose{B}{B-i}(1+y^{A})^{B-i} \gchoose{A}{A-j_1}\cdots\gchoose{A}{A-j_i}y^{Ai-n}.$$
\end{lem}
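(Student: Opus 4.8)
The plan is to derive the identity from Fa\`a di Bruno's formula for the composition of the outer function $h(u)=u^{B}$ with the inner function $g(y)=1+y^{A}$. To make sense of $(1+y^{A})^{B}$ when $A$ or $B$ is negative, I would regard $y$ as a variable on $(0,\infty)$ (equivalently, work in the formal power series ring $\QQ[[y-y_{0}]]$ for a fixed $y_{0}>0$); there each expression below is a legitimate smooth function, and the asserted formula is an identity of Laurent polynomials in $y$ once a common power of $1+y^{A}$ is cleared. The only inputs are the relation $\gchoose{A}{A-j}=\binom{A}{j}=\tfrac{1}{j!}A(A-1)\cdots(A-j+1)$ (and likewise for $B$), together with the elementary derivatives $h^{(i)}(u)=i!\binom{B}{i}u^{B-i}$ and $g^{(m)}(y)=m!\binom{A}{m}y^{A-m}$ for $m\geq 1$.

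Concretely, I would write Fa\`a di Bruno in Bell-polynomial form
$$\frac{d^{n}}{dy^{n}}\,h(g(y))=\sum_{i=0}^{n}h^{(i)}(g(y))\,B_{n,i}\big(g'(y),g''(y),\dots\big),\qquad B_{n,i}(x_{1},x_{2},\dots)=\sum_{\substack{\ell_{1}+\ell_{2}+\cdots=i\\ \ell_{1}+2\ell_{2}+\cdots=n}}\frac{n!}{\prod_{m\geq 1}\ell_{m}!}\prod_{m\geq 1}\Big(\frac{x_{m}}{m!}\Big)^{\ell_{m}}.$$
Substituting $x_{m}=m!\binom{A}{m}y^{A-m}$, the factorials cancel and the $y$-exponent collapses to $\sum_{m}(A-m)\ell_{m}=Ai-n$, so $B_{n,i}(g'(y),\dots)=n!\,y^{Ai-n}\sum_{(\ell_{m})}\prod_{m}\tfrac{1}{\ell_{m}!}\binom{A}{m}^{\ell_{m}}$, the sum over $(\ell_m)$ with $\sum_m\ell_m=i$ and $\sum_m m\ell_m=n$. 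I would then reindex this sum over multiplicity vectors $(\ell_{m})$ as a sum over ordered compositions $(j_{1},\dots,j_{i})$ of $n$ into positive parts: a composition having $\ell_{m}$ parts equal to $m$ is listed exactly $i!/\prod_{m}\ell_{m}!$ times, so the inner sum equals $\tfrac{1}{i!}\sum_{\substack{j_{1}+\cdots+j_{i}=n\\ j_{1},\dots,j_{i}\geq 1}}\binom{A}{j_{1}}\cdots\binom{A}{j_{i}}$. Inserting $h^{(i)}(g(y))=i!\binom{B}{i}(1+y^{A})^{B-i}$ makes the $i!$ cancel; for $n\geq 1$ the composition sum is empty when $i=0$, so that term drops, and rewriting $\binom{B}{i}=\gchoose{B}{B-i}$ and $\binom{A}{j}=\gchoose{A}{A-j}$ gives exactly the asserted expression.

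I do not expect a genuine obstacle: the one non-mechanical step is the standard reindexing between multiplicity vectors and ordered compositions inside the Bell polynomial, and the only point needing care is that $(1+y^{A})^{B}$ must be interpreted correctly for negative $A,B$, handled once and for all by passing to $(0,\infty)$ or to $\QQ[[y-y_{0}]]$, where Fa\`a di Bruno applies verbatim. If one prefers to avoid Bell polynomials, the identity also follows by induction on $n$: differentiating the right-hand side for $n$ and separating the two contributions (the derivative landing on $(1+y^{A})^{B-i}$ versus on $y^{Ai-n}$), one is reduced to the recursion $iA\,c_{n,i-1}+(Ai-n)\,c_{n,i}=(n+1)\,c_{n+1,i}$ for $c_{n,i}:=\sum_{\substack{j_{1}+\cdots+j_{i}=n\\ j_{1},\dots,j_{i}\geq 1}}\binom{A}{j_{1}}\cdots\binom{A}{j_{i}}$; since $\sum_{n\geq 0}c_{n,i}x^{n}=\phi(x)^{i}$ with $\phi(x)=(1+x)^{A}-1$, this recursion is the coefficientwise form of applying $\partial_{x}$ to $\phi^{i}$ and using the ODE $(1+x)\phi'(x)=A\bigl(1+\phi(x)\bigr)$. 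This route is correct but carries heavier bookkeeping, so I would present the Fa\`a di Bruno argument.
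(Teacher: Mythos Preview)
Your proposal is correct. The Fa\`a di Bruno argument goes through exactly as you describe: the derivatives $h^{(i)}(u)=i!\binom{B}{i}u^{B-i}$ and $g^{(m)}(y)=m!\binom{A}{m}y^{A-m}$ plug into the Bell-polynomial form, the $y$-exponent collapses to $Ai-n$, the reindexing from multiplicity vectors to ordered compositions absorbs the factor $i!/\prod_m\ell_m!$, and the $i=0$ term drops for $n\ge 1$. Your alternative inductive route via the recursion $iA\,c_{n,i-1}+(Ai-n)\,c_{n,i}=(n+1)\,c_{n+1,i}$ also checks out (it is the coefficient identity coming from $i\phi^{i-1}\phi'$ together with $(1+x)\phi'=A(1+\phi)$).

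As for comparison with the paper: there is nothing to compare. The paper explicitly omits the proof, calling the lemma a ``standard fact.'' Your Fa\`a di Bruno derivation is precisely the standard justification one would supply, so it is an appropriate fill-in rather than an alternative approach.
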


\begin{lem}
Assume that $b=c\geq 2$. If $e_1<c$ and $n\geq 4$, then the Euler-Poincar\'{e} characteristic of $\emph{Gr}_{(e_1,e_2)}(M(n))$ is equal to
\begin{equation}\label{0809eq02}
{{ce_2}\choose {e_1}}{{a_{n-2}}\choose{e_2}} + \sum_{k=1}^{e_2}\sum_{i=1}^k \sum_{\tiny{\begin{array}{c} j_1+\cdots+j_i=k\\j_1,\cdots,j_i\geq 1  \end{array} }} \tiny{{{a_{n-3}} \choose i}{{ce_2-i}\choose {e_1-i}} \gchoose{-c}{-c-j_1}\cdots\gchoose{-c}{-c-j_i}{{a_{n-2}}\choose{e_2-k}}}.
 \end{equation}
\end{lem}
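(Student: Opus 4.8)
The plan is to start from the expression in Lemma~\ref{e1small}, namely
\begin{equation*}
\chi(\text{Gr}_{(e_1,e_2)}(M(n)))=\sum_{t_{n-4}} \gchoose{a_{n-3}}{t_{n-4}}\gchoose{a_{n-2}-ct_{n-4}}{a_{n-2}-ct_{n-4}-e_2}\gchoose{-a_{n-3}+ce_2}{-a_{n-3}+ce_2-e_1+t_{n-4}},
\end{equation*}
and to rewrite it using generalized binomial coefficients $\binom{A}{B}=\gchoose{A}{A-B}$, so that the three factors become $\binom{a_{n-3}}{t_{n-4}}$, $\binom{a_{n-2}-ct_{n-4}}{e_2}$ and $\binom{-a_{n-3}+ce_2}{e_1-t_{n-4}}$. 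Writing $k=t_{n-4}$ for brevity, the goal is to show this sum equals \eqref{0809eq02}. The strategy is to extract the $k=0$ term, which gives exactly $\binom{ce_2}{e_1}\binom{a_{n-2}}{e_2}$, and to massage the remaining terms $k\geq 1$ into the stated double sum over $i$ and the composition $(j_1,\dots,j_i)$ of $k$.

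The key device is the generating-function identity of Lemma~\ref{stfact}. First I would recognize that $\binom{a_{n-2}-ck}{e_2}$ is the coefficient of $y^{a_{n-2}-ck-e_2}$, equivalently (after a substitution $y\mapsto 1/y$ or a careful reindexing) that $\sum_k \binom{a_{n-3}}{k}\binom{a_{n-2}-ck}{e_2}(\text{something})^k$ can be read off from a Taylor coefficient of a product of the form $(1+y^{-c})^{a_{n-3}}$ times a fixed power of $(1+y)$. More precisely, using the Chu--Vandermonde-type expansion $\binom{a_{n-2}-ck}{e_2}=\sum_{k'}\binom{-ck}{k'}\binom{a_{n-2}}{e_2-k'}$, one isolates the dependence on $k$ into the factor $\binom{a_{n-3}}{k}\binom{-ck}{k'}\binom{-a_{n-3}+ce_2}{e_1-k}$, and the plan is to identify $\sum_{k\geq 1}\binom{a_{n-3}}{k}\binom{-ck}{k'}(\cdots)$ with $\tfrac{1}{k'!}\tfrac{d^{k'}}{dy^{k'}}(1+y^{-c})^{a_{n-3}}$ evaluated suitably, then apply Lemma~\ref{stfact} with $A=-c$, $B=a_{n-3}$, $n=k'$. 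After the dust settles, the summation variable $k'$ plays the role of $k$ in \eqref{0809eq02}, the index $i$ counts the number of differentiation ``hits'' coming from $\gchoose{B}{B-i}=\binom{a_{n-3}}{i}$, the composition $j_1+\cdots+j_i=k$ comes from the inner sum in Lemma~\ref{stfact}, the factors $\gchoose{-c}{-c-j_\ell}$ appear verbatim, and the leftover pieces $\binom{ce_2-i}{e_1-i}$ and $\binom{a_{n-2}}{e_2-k}$ come from reassembling $\binom{-a_{n-3}+ce_2}{e_1-k}$ (via another Vandermonde split that produces the factor $\binom{ce_2-i}{e_1-i}$ once $i$ of the ``$a_{n-3}$'' slots are used up) and from the residual $\binom{a_{n-2}}{e_2-k'}$ in the Vandermonde expansion above.

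I expect the main obstacle to be bookkeeping of signs and the precise handling of the modified binomial coefficients when the upper arguments $a_{n-2}-ck$ or $-a_{n-3}+ce_2$ are negative: the identity $\gchoose{A}{B}=\binom{A}{A-B}$ is only clean for $A\geq 0$, and one must be careful that Lemma~\ref{stfact}, which is stated for arbitrary integers $A,B$, is being applied with a consistent convention. Relatedly, justifying that the algebraic manipulation of formal power series (or of polynomial identities in a symbol $c$, then specializing) is legitimate — in particular that only finitely many terms contribute and no spurious contributions from negative exponents slip in — will require the kind of vanishing argument already used in the proof of Lemma~\ref{e1small} (where $s_{n-3}\geq c>e_1$ forces a binomial coefficient to vanish). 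The remaining steps — the two applications of Vandermonde's identity and the substitution of Lemma~\ref{stfact} — are routine once the conventions are pinned down, so I would state them briefly and spend the bulk of the writeup on matching indices and verifying that the $k=0$ term is exactly the displayed leading term ${ce_2\choose e_1}{a_{n-2}\choose e_2}$.
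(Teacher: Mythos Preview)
Your ingredients are right --- start from Lemma~\ref{e1small}, invoke Lemma~\ref{stfact} with $A=-c$, $B=a_{n-3}$ --- but the paper organizes them more directly than your Vandermonde-based plan. The paper introduces the single generating function $F(y)=(1+y^{-c})^{a_{n-3}}y^{a_{n-2}}$, computes $\tfrac{1}{e_2!}F^{(e_2)}(y)$ in two ways (term-by-term, giving $\sum_i\binom{a_{n-3}}{i}\gchoose{a_{n-2}-ci}{a_{n-2}-ci-e_2}y^{a_{n-2}-ci-e_2}$; and via Leibniz combined with Lemma~\ref{stfact}, giving a sum with factors $(1+y^{-c})^{a_{n-3}-i}$, the composition sum over $(j_1,\dots,j_i)$, and $\binom{a_{n-2}}{e_2-k}$), then multiplies both expressions by $(1+y^{-c})^{-a_{n-3}+ce_2}$ and reads off the coefficient of $y^{a_{n-2}-ce_1-e_2}$. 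The multiplication converts $(1+y^{-c})^{a_{n-3}-i}$ into $(1+y^{-c})^{ce_2-i}$, whose expansion supplies $\binom{ce_2-i}{e_1-i}$ directly, while the other side reproduces the sum from Lemma~\ref{e1small}. This single coefficient-extraction replaces your two Vandermonde splittings and sidesteps the sign/convention worries you raise, since everything is a formal Laurent-series identity. The vague spot in your plan --- how to ``identify $\sum_k \binom{a_{n-3}}{k}\binom{-ck}{k'}(\cdots)$ with a derivative'' when $(\cdots)=\binom{-a_{n-3}+ce_2}{e_1-k}$ still depends on $k$ --- is exactly what the multiplication by $(1+y^{-c})^{-a_{n-3}+ce_2}$ handles in one move. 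Your route can be completed purely combinatorially too, but you would need the extra identity $\binom{a_{n-3}}{t}\binom{t}{i}=\binom{a_{n-3}}{i}\binom{a_{n-3}-i}{t-i}$ before the second Vandermonde can be applied, and you have not spelled that out.
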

\begin{proof}
We want to show that (\ref{e1smallformula}) is equal to (\ref{0809eq02}). We start with the following binomial formula:
$$(1+y^{-c})^{a_{n-3}} y^{a_{n-2}} = \sum_i {{a_{n-3}} \choose i}y^{a_{n-2}-ci}.$$
By taking the $e_2$-th derivative, we get
\begin{equation}\label{0805eq1}\frac{1}{e_2!}\frac{d^{e_2}}{dy^{e_2}}\left[(1+y^{-c})^{a_{n-3}} y^{a_{n-2}}\right] = \sum_i {{a_{n-3}} \choose i}
\gchoose{a_{n-2}-ci}{a_{n-2}-ci-e_2}y^{a_{n-2}-ci-e_2}.\end{equation}
Then we multiply (\ref{0805eq1}) by
$$
(1+y^{-c})^{-a_{n-3}+e_2c}=\sum_j \gchoose{-a_{n-3}+e_2c}{-a_{n-3}+e_2c-e_1+j}(y^{-c})^{e_1-j},
$$which yields
\begin{equation}\label{0809eq03}
\aligned &(1+y^{-c})^{-a_{n-3}+e_2c}\frac{1}{e_2!}\frac{d^{e_2}}{dy^{e_2}}\left[(1+y^{-c})^{a_{n-3}} y^{a_{n-2}}\right]\\
&=\sum_i {{a_{n-3}} \choose i}
\gchoose{a_{n-2}-ci}{a_{n-2}-ci-e_2}y^{a_{n-2}-ci-e_2}\sum_j \gchoose{-a_{n-3}+e_2c}{-a_{n-3}+e_2c-e_1+j}(y^{-c})^{e_1-j}\\
&=\sum_{i,j} {{a_{n-3}} \choose i}
\gchoose{a_{n-2}-ci}{a_{n-2}-ci-e_2}\gchoose{-a_{n-3}+e_2c}{-a_{n-3}+e_2c-e_1+j}y^{a_{n-2}-c(e_1+i-j)-e_2}.
\endaligned\end{equation}

On the other hand, Lemma~\ref{stfact} implies that
\begin{equation}\label{0809eq05}
\aligned &\frac{1}{e_2!}\frac{d^{e_2}}{dy^{e_2}}\left[(1+y^{-c})^{a_{n-3}} y^{a_{n-2}}\right]\\
&= (1+y^{-c})^{a_{n-3}} {{a_{n-2}}\choose{e_2}}y^{a_{n-2}-e_2}\\
&\,\,\,\,\,\, +\sum_{k=1}^{e_2}\sum_{i=1}^k \sum_{\tiny{\begin{array}{c} j_1+\cdots+j_i=k\\j_1,\cdots,j_i\geq 1  \end{array} }} \tiny{{{a_{n-3}} \choose i}(1+y^{-c})^{a_{n-3}-i} \gchoose{-c}{-c-j_1}\cdots\gchoose{-c}{-c-j_i}y^{-ci-k}{{a_{n-2}}\choose{e_2-k}}y^{a_{n-2}-e_2+k}}.
\endaligned
\end{equation}
Combining (\ref{0809eq03}) and  (\ref{0809eq05}),  we get
$$\aligned
&\sum_{i,j} {{a_{n-3}} \choose i}
\gchoose{a_{n-2}-ci}{a_{n-2}-ci-e_2}\gchoose{-a_{n-3}+e_2c}{-a_{n-3}+e_2c-e_1+j}y^{a_{n-2}-c(e_1+i-j)-e_2}\\
&= (1+y^{-c})^{ce_2} {{a_{n-2}}\choose{e_2}}y^{a_{n-2}-e_2}\\
&\,\,\,\,\,\, +\sum_{k=1}^{e_2}\sum_{i=1}^k \sum_{\tiny{\begin{array}{c} j_1+\cdots+j_i=k\\j_1,\cdots,j_i\geq 1  \end{array} }} \tiny{{{a_{n-3}} \choose i}(1+y^{-c})^{ce_2-i} \gchoose{-c}{-c-j_1}\cdots\gchoose{-c}{-c-j_i}{{a_{n-2}}\choose{e_2-k}}y^{a_{n-2}-e_2-ci}}.
\endaligned$$

Comparing the coefficients of $y^{a_{n-2}-ce_1-e_2}$ in both sides, we obtain 
\begin{equation}\label{0805eq07}
\aligned &\sum_{i} {{a_{n-3}} \choose i}
\gchoose{a_{n-2}-ci}{a_{n-2}-ci-e_2}\gchoose{-a_{n-3}+e_2c}{-a_{n-3}+e_2c-e_1+i} \\
&={{ce_2}\choose {e_1}}{{a_{n-2}}\choose{e_2}} + \sum_{k=1}^{e_2}\sum_{i=1}^k \sum_{\tiny{\begin{array}{c} j_1+\cdots+j_i=k\\j_1,\cdots,j_i\geq 1  \end{array} }} \tiny{{{a_{n-3}} \choose i}{{ce_2-i}\choose {e_1-i}} \gchoose{-c}{-c-j_1}\cdots\gchoose{-c}{-c-j_i}{{a_{n-2}}\choose{e_2-k}}}.
\endaligned \end{equation}Then the desired statement follows from Lemma~\ref{e1small}.
\end{proof}

Now the EC-polynomial is expected to come into play. 
\begin{conj}\label{ECecconj}
Assume that $b=c\geq 2$. If $e_1<c$ and $n\geq 4$, then the Euler-Poincar\'{e} characteristic of $\emph{Gr}_{(e_1,e_2)}(M(n))$ is equal to
\begin{equation}\label{0809eq09}\aligned
 \frac{1}{(e_2!)^2}\sum_{z_1+\cdots+z_{e_2}=e_1}{c\choose {z_1}}\cdots{c\choose {z_{e_2}}}EC_{e_2}(\{z_1,\cdots,z_{e_2}\};-a_{n-2},-a_{n-3},c).
\endaligned \end{equation}
\end{conj}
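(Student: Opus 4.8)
The plan is to prove Conjecture~\ref{ECecconj} by showing that the proposed expression (\ref{0809eq09}) equals (\ref{0809eq02}), which by the previous lemma is the Euler characteristic of $\textnormal{Gr}_{(e_1,e_2)}(M(n))$ when $e_1<c$ and $n\geq 4$. Thus the entire argument reduces to a combinatorial identity: after the substitution $q\mapsto -a_{n-2}$, $t\mapsto -a_{n-3}$, $r\mapsto c$ and summation against the product of binomials $\binom{c}{z_1}\cdots\binom{c}{z_{e_2}}$ over compositions $z_1+\cdots+z_{e_2}=e_1$, the expanded form of $EC_{e_2}$ must collapse to (\ref{0809eq02}). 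I would first record the linear substitutions $-|S_j|q+t\sigma(S_j)-r\sum_{i<j}d(j,i)$ and the factor $e(S_j)$ under these specializations, noting in particular that $-|S_j|q$ becomes $|S_j|a_{n-2}$ and $t\sigma(S_j)$ becomes $-a_{n-3}\sigma(S_j)$.

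The key structural observation I would exploit is the recursion $a_{n-1}=c\,a_{n-2}-a_{n-3}$, i.e. $-a_{n-3}=a_{n-1}-c\,a_{n-2}$, together with the generating-function machinery already developed in the proof of the preceding lemma (the $e_2$-th derivative of $(1+y^{-c})^{a_{n-3}}y^{a_{n-2}}$, Lemma~\ref{stfact}, and the coefficient extraction at $y^{a_{n-2}-ce_1-e_2}$). The idea is that the generating function identity producing (\ref{0805eq07}) is really a statement about iterated derivatives of a power of $(1+y^{-c})$, and that $EC_{e_2}$ is precisely the combinatorial bookkeeping device that organizes the Leibniz expansion of such an iterated derivative when one tracks the "order" of each differentiation step. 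Concretely, I would introduce an auxiliary variable for each index $\ell\in\{1,\dots,e_2\}$, form the product $\prod_{\ell}\big((1+y^{-c})^{\text{something}}\cdot(\cdots)\big)$, and interpret the set partition $P=S_1\sqcup\cdots\sqcup S_{|P|}$ of $\{1,\dots,e_2\}$ as recording which differentiations act on which tensor factor, with $e(S_j)$ accounting for the combinatorial weight of the modified binomial coefficients $\gchoose{-c}{-c-j}$ and $d(j,i)$ tracking the shift in exponents coming from earlier blocks. The factor $n!=e_2!$ in the definition of $EC_{e_2}$ and the $1/(e_2!)^2$ in (\ref{0809eq09}) should combine to cancel the $1/m!$ normalizations inside $e(T)$ against the overcounting of orderings.

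The precise dictionary is: expand $\frac{d^{e_2}}{dy^{e_2}}\big[(1+y^{-c})^{a_{n-3}}y^{a_{n-2}}\big]$ by Leibniz over ordered set partitions of the $e_2$ derivative-slots; each block $S_j$ of size $m$ contributes a factor that, via Lemma~\ref{stfact} applied blockwise, is a sum over $\gchoose{-c}{-c-j_1}\cdots\gchoose{-c}{-c-j_m}$ — and the claim is that the averaged product $e(S_j)$ is exactly the coefficient that emerges once one also multiplies in $(1+y^{-c})^{-a_{n-3}+ce_2}$ (the extra factor used to pass from (\ref{0805eq1}) to (\ref{0809eq03})) and extracts a single power of $y$. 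I would then verify the base cases $e_2=1$ (where $EC_1=-q+tz_1$ specializes to $a_{n-2}-a_{n-3}z_1$, matching $\binom{c}{z_1}$-weighted sums trivially) and $e_2=2$ directly from the explicit formula for $EC_2$ given in Example~\ref{ECexample}, comparing with (\ref{0809eq02}); these should be short and would pin down the normalization constants unambiguously. For the general case I would set up an induction on $e_2$: splitting off the block containing the element $1$ isolates a factor $e(S_1)\cdot(-|S_1|q+t\sigma(S_1))$ times an $EC$-type sum over the complement, but with shifted parameters coming from the $-r\sum d(j,i)$ terms — so the induction hypothesis has to be stated with sufficiently general shifted arguments.

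The main obstacle I expect is matching the $d(j,i)$-terms with the exponent shifts in the generating-function picture. In (\ref{0805eq07}) the interaction between blocks is encoded rather rigidly by the single-variable substitution $y$, whereas in $EC_{e_2}$ the cross-block contribution $-r\sum_{i<j}d(j,i)=-r\sum_{i<j}(|S_j|\sigma(S_i)-|S_i|\sigma(S_j))$ is a genuinely multivariable quantity that only collapses after summing against $\binom{c}{z_1}\cdots\binom{c}{z_{e_2}}$; understanding why this particular bilinear-in-the-blocks correction is the right one — presumably because differentiating $(1+y^{-c})^{a_{n-3}}$ a total of $j_1+\cdots$ times produces, through $a_{n-3}=c\,a_{n-2}-a_{n-1}$, exactly a term proportional to the number of $z$'s already consumed times the remaining block sizes — is the crux. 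I would attempt to make this rigorous by computing the $\binom{c}{z_\ell}$-weighted sum first, which by the identity $\sum_{z}\binom{c}{z}y^z=(1+y)^c$ converts each $z_\ell$ into a logarithmic derivative $y\frac{d}{dy}$ acting on $(1+y)^c$, thereby reducing the whole of (\ref{0809eq09}) to a statement purely about differential operators applied to powers of $(1+y)$, at which point the $d(j,i)$ bookkeeping should become transparent and the identity should follow from the same coefficient-comparison technique already used in the excerpt.
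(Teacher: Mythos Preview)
The statement you are addressing is labeled a \emph{Conjecture} in the paper and is not proved there. Immediately after stating it, the author writes only that one needs to show $(\ref{0809eq02})=(\ref{0809eq09})$, that the case $e_2\le 2$ is elementary, and that the identity has been checked (by computer) for $e_2\le 5$. There is no argument in the paper beyond this, so there is no ``paper's own proof'' to compare your proposal against.

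As for the proposal itself: it is a plan, not a proof. You correctly identify the reduction to the combinatorial identity $(\ref{0809eq02})=(\ref{0809eq09})$ and correctly note that the $e_2=1,2$ cases can be done by hand from Example~\ref{ECexample}. But the heart of the matter---your proposed dictionary between the block structure of $EC_{e_2}$ and a Leibniz expansion of $\frac{d^{e_2}}{dy^{e_2}}\big[(1+y^{-c})^{a_{n-3}}y^{a_{n-2}}\big]$---is asserted rather than established. In particular, you yourself flag the crux: why the cross-block term $-r\sum_{i<j}d(j,i)$ and the averaged weight $e(S_j)$ are exactly what the generating-function picture produces after the $\binom{c}{z_\ell}$-weighted sum. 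Your heuristic that summing against $\binom{c}{z_\ell}$ converts each $z_\ell$ into a logarithmic derivative is suggestive, but the definition of $e(T)$ involves a sum over permutations of $\{2,\dots,m\}$ with the distinguished anchor $T(1)$ held fixed and a very specific product $\prod_i\big(r(\cdots)-i\big)$; nothing in your outline explains where this particular asymmetry and these particular shifts $-i$ come from on the generating-function side. The induction you sketch (split off the block containing $1$) also runs into the difficulty that the remaining sum is over partitions of the complement but with parameters shifted by quantities depending on the \emph{values} $z_\ell$ in $S_1$, so it is not literally an $EC$ of smaller size with constant parameters. Until that step is made precise, the proposal remains a strategy rather than a proof---which is consistent with the paper's own status for this statement.
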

To prove Conjecture~\ref{ECecconj}, one needs to show that $(\ref{0809eq02})=(\ref{0809eq09})$. In fact, the case $e_2\leq 2$ is elementary, from which the author guessed the general case and checked $(\ref{0809eq02})=(\ref{0809eq09})$ when $e_2\leq 5$.

\noindent\emph{Acknowledgement.} We are very much grateful to anonymous referees for their useful suggestions and helpful comments on \cite{L}.


\begin{thebibliography}{99}



\bibitem{CZ}
P. Caldero and  A. Zelevinsky,  Laurent expansions in cluster algebras via quiver representations,	 Mosc. Math. J. \textbf{6} (2006),  No. 3, 411--429. MR2274858 (2008j:16045).




\bibitem{FZ}
S. Fomin and  A. Zelevinsky, Cluster algebras I: Foundations, J. Amer. Math. Soc.  \textbf{15} (2002) No.2  497--529, 2002. MR1887642 (2003f:16050).

\bibitem{FZ4}
S. Fomin and  A. Zelevinsky, Cluster algebras IV: Coefficients, Comp. Math. \textbf{143} (2007), 112--164.

\bibitem{L}
K. Lee, On cluster variables of rank two acyclic cluster algebras,  arXiv:1008.1829.

\bibitem{MP}
G. Musiker and J. Propp, Combinatorial interpretations for rank-two cluster algebras of affine type, Electron. J. Combin. \textbf{14} (2006). MR2285819 (2008j:05374).


\bibitem{SZ}
P. Sherman and A. Zelevinsky, Positivity and canonical bases in rank 2 cluster algebras of finite and affine types,  Mosc. Math. J. \textbf{4} (2004) No. 4, 947--974. MR2124174 (2006c:16052).






 \end{thebibliography}
\end{document}